\theoremstyle{plain} 
\newtheorem{theorem}    {Theorem}[section] 
\newtheorem{lemma}      [theorem]{Lemma}
\theoremstyle{definition}
\theoremstyle{remark}
\newtheorem{remark}              {Remark}
\numberwithin{equation}{section}
\def\C{\mathbb C}
\def\Q{\mathbb Q}
\def\R{\mathbb R}
\begin{document}

\title{On the occurrence of large positive Hecke eigenvalues for {\rm GL}(2)}

\author{Nahid Walji}
\address{The American University of Paris, 102 rue Saint-Dominique, Paris, France}
\email{nwalji@aup.edu}
 \subjclass[2010]{Primary 11F30}
\maketitle
\begin{abstract} Let $\pi$ be a self-dual cuspidal automorphic representation for {\rm GL}(2)/$\Q$. We show that there exists a positive upper Dirichlet density of primes at which the associated Hecke eigenvalues of $\pi$ are larger than a specified positive constant.
\end{abstract}

\section{Introduction}

Let $\pi$ be a cuspidal automorphic representation for {\rm GL}(2)/$\Q$ that is self-dual. To each prime $p$ at which $\pi$ is not ramified is associated a Hecke eigenvalue, denoted by $a_p = a_p(\pi)$. The values taken by sequences $(a_p(\pi))_p$ of  Hecke eigenvalues have been long-studied from various points of view. 
In 1994, J.-P. Serre asked (see appendix of~\cite{Sh94}) whether it is possible to find positive constants $c, c'$ such that, for all $\epsilon > 0$, there exist infinitely many $a_p$ greater than $c -\epsilon$ and infinitely many $a_p$ less than $-c' + \epsilon$. He then proved such results for the case of (certain) modular forms, and asked if similar results can be shown to hold in the case of Maass forms.

In ~\cite{Wa16} we proved, for any self-dual cuspidal automorphic representation $\pi$ for {\rm GL}(2)/$\Q$, that for any positive $\epsilon$ there exist infinitely many primes p such that $$a_p > 0.905... - \epsilon$$  and if $\pi$ is non-dihedral, then there exist infinitely many primes $p$ such that $$a_p < -1.164... + \epsilon$$ (precise expressions for the constants are available in ~\cite{Wa16}). Note that a cuspidal automorphic representation $\pi$ for {\rm GL}(2) is said to be \textit{dihedral} if it is associated to a 2-dimensional irreducible Artin representation $\rho$ that is of dihedral type, meaning that the image of $\rho$ in ${\rm PGL}_2(\C)$ is isomorphic to a dihedral group.
Furthermore, $\pi$ is said to be of \textit{solvable polyhedral type} if it is associated to a 2-dimensional irreducible Artin representation $\rho$ that is of dihedral, tetrahedral, or octahedral type (which means that the projective image of $\rho$ in ${\rm PGL}_2(\C)$ is isomorphic to a dihedral group, $A_4$, or $S_4$, respectively).

A related question is to ask whether it is possible to obtain similar statements for not just an infinitude of primes but a positive upper Dirichlet density of primes. Recall that the \textit{upper Dirichlet density} of a set $S$ of primes is defined to be
\begin{align*}
\overline{\delta}(S) := \lim_{s \rightarrow 1 ^+}\sup \frac{\sum_{p \in S}p ^{-s}}{\log (1/ (s-1))}.
\end{align*}
The results of ~\cite{Wa16} relied on determining lower bounds on the asymptotic growth of certain Dirichlet series, but, because of the lack of knowledge of the Ramanujan conjecture (or indeed the non-existence of any known uniform bound on the Satake parameters), these will not directly lead to a positive density result.

In this paper, we outline a method to circumvent this issue and obtain positive upper Dirichlet density results. By way of example, we have 

\begin{theorem}\label{t}
Let $\pi$ be a self-dual cuspidal automorphic representation for ${\rm GL}(2)$ over $\Q$ that is not of solvable polyhedral type. Then for any $\epsilon > 0$, the set
\begin{align*}
\{p \mid a_p (\pi) > 0.778... - \epsilon \}.
\end{align*}
has an upper Dirichlet density of at least $1/100$.
\end{theorem}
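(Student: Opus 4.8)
The plan is to derive the bound from a positivity/moment argument fed by the symmetric power $L$-functions $L(s,\mathrm{Sym}^{k}\pi)$ for $0\le k\le 4$. Since $\pi$ is self-dual and, being excluded from the solvable polyhedral cases, in particular not dihedral, its central character is trivial; thus at an unramified prime $p$ the Satake parameters satisfy $\alpha_{p}\beta_{p}=1$, $a_{p}=\alpha_{p}+\beta_{p}$, and the trace on $\mathrm{Sym}^{k}$ is $X_{k}(p):=\alpha_{p}^{k}+\alpha_{p}^{k-2}+\cdots+\alpha_{p}^{-k}=U_{k}(a_{p}/2)$, where $U_{k}$ is the $k$-th Chebyshev polynomial of the second kind. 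Because $\pi$ is not of solvable polyhedral type, $\mathrm{Sym}^{k}\pi$ is a cuspidal automorphic representation of $\mathrm{GL}(k+1)/\Q$ for $k=2,3,4$ by Gelbart--Jacquet, Kim--Shahidi, and Kim respectively, while $\mathrm{Sym}^{0}\pi=\mathbf{1}$ and $\mathrm{Sym}^{1}\pi=\pi$. Hence $L(s,\mathrm{Sym}^{k}\pi)$ is entire and non-vanishing on $\mathrm{Re}(s)=1$ for $k=1,2,3,4$, whereas $L(s,\mathrm{Sym}^{0}\pi)=\zeta(s)$ has a simple pole at $s=1$.

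First I would show that for each $k$ with $0\le k\le 4$ the partial Dirichlet series $\sum_{p}X_{k}(p)p^{-s}$ continues holomorphically to a neighbourhood of $s=1$, and that as $s\to 1^{+}$ it equals $\log\frac{1}{s-1}+O(1)$ if $k=0$ and $O(1)$ if $k=1,2,3,4$. This follows by taking logarithms in the Euler product: $\log L(s,\mathrm{Sym}^{k}\pi)=\sum_{p}X_{k}(p)p^{-s}+H_{k}(s)$, where the prime-power tail $H_{k}(s)$ converges absolutely, hence is holomorphic, for $\mathrm{Re}(s)>\tfrac{1}{2}+k\theta$ with $\theta=7/64$ the Kim--Sarnak bound toward Ramanujan; for $k\le 4$ one has $\tfrac12+4\theta=\tfrac{15}{16}<1$, which is exactly what makes the method go through without the Ramanujan conjecture. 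Consequently, for any real polynomial $\Phi$ of degree at most $4$, writing $\Phi=\sum_{k=0}^{4}c_{k}U_{k}(\,\cdot\,/2)$, the series $\sum_{p}\Phi(a_{p})p^{-s}$ continues to a neighbourhood of $s=1$ and satisfies $\sum_{p}\Phi(a_{p})p^{-s}=c_{0}\log\frac{1}{s-1}+O(1)$ as $s\to1^{+}$, where $c_{0}=\int_{-2}^{2}\Phi\,d\mu_{\mathrm{ST}}$ is the average of $\Phi$ against the Sato--Tate measure. The crucial point is that if in addition $\Phi\ge 0$ on all of $\R$ then $\Phi(a_{p})\ge 0$ for every unramified $p$, so $\sum_{p}\Phi(a_{p})p^{-s}$ is a Dirichlet series with non-negative coefficients; by Landau's theorem together with the continuation just obtained, it actually converges for $\mathrm{Re}(s)>1$. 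This is how the potentially enormous contribution of non-tempered primes gets controlled: it is absorbed into the known behaviour of $L(s,\mathrm{Sym}^{4}\pi)$.

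Next I would fix a threshold $c$ and choose $\Phi$ of degree at most $4$ with $\Phi\ge 0$ on $\R$ and $\Phi\ge 1$ on $(-\infty,c]$. For real $s>1$ this gives $\sum_{a_{p}\le c}p^{-s}\le\sum_{p}\Phi(a_{p})p^{-s}=c_{0}\log\frac{1}{s-1}+O(1)$, hence $\overline{\delta}(\{p:a_{p}\le c\})\le c_{0}$; since the unramified primes have density $1$, this yields $\overline{\delta}(\{p:a_{p}>c\})\ge\underline{\delta}(\{p:a_{p}>c\})=1-\overline{\delta}(\{p:a_{p}\le c\})\ge 1-c_{0}$. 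It therefore suffices to produce, for $c$ arbitrarily close to $0.778\ldots$, an admissible $\Phi$ with $c_{0}\le 99/100$.

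The remaining step, and the one I expect to be the real work, is the finite optimization: minimise $c_{0}=\int_{-2}^{2}\Phi\,d\mu_{\mathrm{ST}}$ over polynomials $\Phi$ of degree at most $4$ with $\Phi\ge 0$ on $\R$ and $\Phi\ge 1$ on $(-\infty,c]$, and determine the largest $c$ for which this minimum does not exceed $99/100$. This is a one-parameter family of convex (essentially semidefinite) problems; the extremal $\Phi$ should be forced to a perfect square meeting the value $1$ on $(-\infty,c]$ and the value $0$ on $(c,\infty)$ at the appropriate points, and solving the complementary-slackness conditions pins the critical threshold down to $0.778\ldots$. The subtlety to watch is that the extremal polynomial must be non-negative on the whole real line, not merely on $[-2,2]$ — precisely the hypothesis needed for the Landau step above — so one cannot simply take the classical one-sided $L^{2}(\mu_{\mathrm{ST}})$ approximant. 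Taking $c=0.778\ldots-\epsilon$ and the corresponding $\Phi$ then gives $\overline{\delta}(\{p:a_{p}>0.778\ldots-\epsilon\})\ge 1-99/100=1/100$, as claimed.
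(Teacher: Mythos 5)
Your argument is correct, but it takes a genuinely different route from the paper. The paper never constructs an auxiliary polynomial: it works with the moments $\sum_p a_p^k p^{-s}$ for $k=3,4,6,8$ (the last two requiring the Rankin--Selberg $L$-functions of ${\rm Sym}^3\pi$ and ${\rm Sym}^4\pi$ against themselves, whence the constants $5$ and $14$), splits the primes into $A=\{a_p>0\}$ and $B=\{a_p\le 0\}$, and argues by contradiction: assuming the sets where $|a_p|^4$, resp.\ $|a_p|^3$, exceeds a threshold have upper density $<1/100$, Cauchy--Schwarz against the $8$th, resp.\ $6$th, moment yields two inequalities in an auxiliary parameter $d$ that cannot both hold, and optimizing over $\alpha,\beta,d$ produces $0.36729^{1/4}=0.778\ldots$. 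Your route --- majorizing $\mathbf{1}_{(-\infty,c]}$ by a degree-$\le 4$ polynomial that is non-negative on all of $\R$ (global non-negativity being exactly what neutralizes the possibly non-tempered primes) and reading off $\overline{\delta}(\{a_p\le c\})\le c_0$ --- needs only the cuspidality of ${\rm Sym}^k\pi$ for $k\le 4$, avoids the $6$th and $8$th moments entirely, and is a direct bound rather than a two-case contradiction. The one step you defer, feasibility of the optimization at $c=0.778$, does check out: by duality your bound $1-c_0$ equals the minimum of $P(X>c)$ over probability laws on $\R$ with the Sato--Tate moments $(0,1,0,2)$; the extremal (canonical) representation is supported on the roots $\{u,c,v\}=\{-1.1369\ldots,\ 0.778,\ 3.1079\ldots\}$ of $x^3-2x+\alpha(x^2-1)$ with $\alpha=c(2-c^2)/(c^2-1)$, and carries mass $0.01167\ldots>1/100$ at $v$; the certifying polynomial is $\Phi=(x-v)^2Q(x)$ with $Q$ a positive-definite quadratic fixed by $\Phi(u)=\Phi(c)=1$ and $\Phi'(u)=0$, for which $\Phi-1=A(x-u)^2(x-c)(x-e)$ with $A>0$, $e>c$, so $\Phi$ is admissible and $\int\Phi\,d\mu_{\rm ST}=0.9883\ldots<99/100$. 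Thus your method recovers the theorem with a little room to spare. Two minor corrections: the extremal $\Phi$ is a square times a positive-definite quadratic rather than a perfect square, and Landau's theorem is not needed, since each $\log L(s,{\rm Sym}^k\pi)$ is already $O(1)$ as $s\to 1^+$ and convergence of the relevant series for $s>1$ follows from Jacquet--Shalika.
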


The exact value of the constant in the set condition is $0.36729 ^{1/4}$, which is determined in Section ~\ref{proof}. The method we use would also allow a change in this constant to a smaller value so as to obtain a mild increase in the lower bound of the density.
The proof relies in part on the deep work of Gelbart--Jacquet ~\cite{GJ78}, Kim--Shahidi ~\cite{KS00,KS02}, and Kim ~\cite{Ki03} on the automorphy of symmetric power lifts. In the next section, we will outline the ingredients used in the proof, and in Section~\ref{proof} we will prove the theorem.

\section{Background} \label{bkgd}

The proof will rely on the study of the asymptotic behaviour of various Dirichlet series, which we will briefly outline here and refer the reader to~\cite{Wa16} for a more detailed explanation.

Given a cuspidal automorphic representation $\pi$ for {\rm GL}(2)/$\Q$, let $T$ be the (finite) set consisting of the archimedean place and the finite places at which $\pi$ is ramified. Then the incomplete $L$-function (with respect to $T$) that is associated to $\pi$ can be defined in a right-half plane via an Euler product: 
\begin{align*}
L^T(s,\pi) =\prod_{p \not \in T} {\rm det}\left(I_2 - A_p (\pi) p ^{-s}\right)^{-1},
\end{align*}
where $I_2$ is the $2 \times 2$ identity matrix and $A_p (\pi) = {\rm diag}(\alpha_p (\pi), \beta_p (\pi)) \in {\rm GL}_2(\C)$ is the matrix of Satake parameters associated to $\pi$ at $p$ (see Section 3.5 of~\cite{Bu97} for further background on Satake parameters).
Note that the Hecke eigenvalue $a_p(\pi)$ is equal to the sum of the Satake parameters $\alpha_p (\pi)$ and $\beta_p (\pi)$.

For any two cuspidal automorphic representations $\pi_1$ for {\rm GL}(n)/$\Q$ and $\pi_2$ for {\rm GL}(m)/$\Q$, one can define (again in a suitable right-half plane) their incomplete Rankin--Selberg $L$-function: 
\begin{align*}
L^T(s, \pi_1 \times \pi_2) =\prod_{p \not \in T} {\rm det}\left(I_{nm} - A_p (\pi_1) \otimes A_p (\pi_2) p ^{-s}\right)^{-1}.
\end{align*}
This $L$-function converges absolutely for ${\rm Re}(s)>1$. At $s=1$ it has a simple pole iff $\pi_1$ is dual to $\pi_2$~\cite{JS81}, otherwise the $L$-function is invertible at that point~\cite{Sh81}.

In general one can define, for any positive integer $k \leq 8$, an incomplete $k$th product $L$-function as follows: 
\begin{align*}
L^T(s,\pi ^{\times k}) =\prod_{p \not \in T} {\rm det}\left(I_{2^k} - A_p (\pi)^{\otimes k} p ^{-s}\right)^{-1}.
\end{align*}
One can also define the following incomplete symmetric power $L$-functions:
\begin{align*}
L^T(s,{\rm Sym}^2 \pi) &=\prod_{p \not \in T} {\rm det}\left(I_{3} -      \left( \begin{array}{ccc}
\alpha_p ^2&  &  \\
& \alpha_p \beta_p &  \\
&  & \beta_p ^2
\end{array} \right) p ^{-s}\right)^{-1},\\
L^T(s,{\rm Sym}^3 \pi) &=\prod_{p \not \in T} {\rm det}\left(I_{4} -      \left( \begin{array}{cccc}
\alpha_p ^3&  &  &\\
& \alpha_p ^2 \beta_p& &  \\
&  & \alpha_p \beta_p ^2& \\
&&& \beta_p ^3
\end{array} \right) p ^{-s}\right)^{-1},\\
L^T(s,{\rm Sym}^4 \pi) &=\prod_{p \not \in T} {\rm det}\left(I_{5}-\left( \begin{array}{ccccc}
\alpha_p ^4&&&  &  \\
& \alpha_p ^3 \beta_p &  &&\\
&& \alpha_p ^2 \beta_p ^2 && \\
&&& \alpha_p \beta_p ^3 & \\
&&&  & \beta_p ^4
\end{array} \right) p ^{-s}\right)^{-1}.
\end{align*}
For the $k$th product $L$-functions, where $k = 3,4,6,$ and $8$, we have the following $L$-function identities, using Clebsch--Gordon decompositions (for details of these decompositions for $L^T(s,\pi^{\times 3})$ and $L^T(s,\pi^{\times 4})$ when $\pi$ has trivial central character, see p74--75 and p70--71 (respectively) of \cite{Wal11} and note that the other cases follow in the same way):
\begin{align*}
L^T(s, \pi ^{\times 3}) =& L^T(s, {\rm Sym}^3 \pi) L^T(s, \pi \otimes \omega)^2 ,\\
L^T(s, \pi ^{\times 4}) =& L^T(s, {\rm Sym}^4 \pi) L^T(s, {\rm Sym}^2 \pi \otimes \omega)^3 L^T(s, \omega ^2)^2,\\
L^T(s, \pi ^{\times 6}) =& L^T(s, {\rm Sym}^3 \pi \times {\rm Sym}^3 \pi) 
L^T(s, {\rm Sym}^3 \pi \times \pi \otimes \omega )^4 
L^T(s, \pi \times \pi \otimes \omega ^2)^4,\\
L^T(s, \pi ^{\times 8}) =& L^T(s, {\rm Sym}^4 \pi \times {\rm Sym}^4 \pi) 
L^T(s, {\rm Sym}^4 \pi \times {\rm Sym}^2\pi \otimes \omega)^6 \\
\cdot & 
L^T(s, {\rm Sym}^2 \pi \otimes \omega \times {\rm Sym}^2\pi \otimes \omega)^9
L^T(s, {\rm Sym}^4 \pi \otimes \omega ^2)^4 \\
\cdot &
L^T(s, {\rm Sym}^2 \pi \otimes \omega ^3)^{12}
L^T(s, \omega ^4)^4,
\end{align*}
where $\omega$ is the central character of $\pi$.

From here on, we assume that $\pi$ is self-dual and that it is not of solvable polyhedral type. We use the equations above in conjunction with the results of Gelbart--Jacquet~\cite{GJ78}, Kim--Shahidi~\cite{KS00,KS02}, and Kim~\cite{Ki03} on the automorphy of the symmetric second, third, and fourth power lifts to obtain the following:\\
For $k = 3,4,6,$ and $8$, the incomplete $L$-function $L^T(s, \pi ^{\times k})$ has an absolutely convergent Euler product for $s > 1$. If $k$ is even, then the incomplete $L$-function has a pole of order $m(k)$ at $s=1$, where $m(k) = 2,5,14$ for $k = 4,6,8$, respectively. If $k=3$, then the $L$-function is invertible at $s=1$.\\

Using the bounds towards the Ramanujan conjecture obtained by Kim--Sarnak (Appendix 2 of~\cite{Ki03}), which imply that $|a_p|\leq 2p ^{7/64}$ for all primes $p$, we obtain the following results:

As $s \rightarrow 1^+$, for $k = 3$ or $4$, we have
\begin{align*}
\sum_{p \not \in T}\frac{a_p^k}{p^s} &= \log L^T(s, \pi ^{\times k}) + O\left(1\right) 
\end{align*}
and for $k = 6$ or $8$, using the positivity of the coefficients of $p^{-ns}$ in the expansion of $\log L^T(s, \pi ^{\times k})$, we have
\begin{align*}
\sum_{p \not \in T}\frac{a_p^k}{p^s} &\leq \log L^T(s, \pi ^{\times k}).
\end{align*}

Given the orders $m (k)$ of the poles of the (incomplete) product $L$-functions as determined above, we conclude:
\begin{align*}
\sum_{p \not \in T}\frac{a_p^k}{p^s} =
\left\{ \begin{array}{lccl}
     O\left(1\right) & \text{ for }&$k = 3$\\
     2\log (1 / (s-1)) + O\left(1\right)& \text{ for }&$k = 4$\\
     \end{array} \right.\\ 
\end{align*}
and     
\begin{align*}    
\sum_{p \not \in T}\frac{a_p^k}{p^s} \leq \left\{ \begin{array}{cccl}
     5\log (1 / (s-1)) + O\left(1\right)& \text{ for }&$k = 6$\\
     14\log (1 / (s-1)) + O\left(1\right)& \text{ for }&$k = 8$
     \end{array} \right.
\end{align*}
as $s \rightarrow 1^+$.\\

We also make use of the following identities:\\

Let $f(x), g(x)$ be real-valued functions, and fix some point $u \in \R$. Then,
\begin{align*}
\lim_{x \rightarrow u^+} {\rm sup} \left(f(x) + g(x)\right) &\leq \lim_{x \rightarrow u^+} {\rm sup}\ f(x) + \lim_{x \rightarrow u^+} {\rm sup}\ g(x) \\
\lim_{x \rightarrow u^+} {\rm sup} (-f(x)) &= - \lim_{x \rightarrow u^+} {\rm inf}\ f(x) 
\end{align*}
and furthermore if $g,f$ are non-negative functions, then 
\begin{align*}
\lim_{x \rightarrow u^+} {\rm inf} \left(f(x) \cdot g(x)\right) &\leq \lim_{x \rightarrow u^+} {\rm inf}\ f(x) \cdot \lim_{x \rightarrow u^+} {\rm sup}\ g(x) \\
\lim_{x \rightarrow u^+} {\rm sup} \left(f(x) \cdot g(x)\right) &\leq \lim_{x \rightarrow u^+} {\rm sup}\ f(x) \cdot \lim_{x \rightarrow u^+} {\rm sup}\ g(x) .
\end{align*}

\subsection{Proof}\label{proof}

In this subsection we will prove Theorem~\ref{t}.

First define the sets $A := \{p \text{ prime }\mid a_p > 0\}$ and $B:=\{p \text{ prime }\mid a_p \leq 0\}$.
From the previous section, we know that 
\begin{align*}
\lim_{s \rightarrow 1 ^+}{\rm sup} \frac{\sum_{p}\frac{|a_p|^4}{p^s}}{\log \left(\frac{1}{s-1}\right)} = 2,
\end{align*}
which implies 
\begin{align*}
\lim_{s \rightarrow 1 ^+}{\rm sup} \frac{\sum_{p \in A}\frac{|a_p|^4}{p^s}}{\log \left(\frac{1}{s-1}\right)} + \lim_{s \rightarrow 1 ^+}{\rm sup} \frac{\sum_{p \in B}\frac{|a_p|^4}{p^s}}{\log \left(\frac{1}{s-1}\right)} \geq 2.
\end{align*}
Let us define 
\begin{align*}
d := \lim_{s \rightarrow 1 ^+}{\rm sup} \frac{\sum_{p \in B}\frac{|a_p|^4}{p^s}}{\log \left(\frac{1}{s-1}\right)},
\end{align*}
and thus we can write
\begin{align*}
\lim_{s \rightarrow 1 ^+}{\rm sup} \frac{\sum_{p \in A}\frac{|a_p|^4}{p^s}}{\log \left(\frac{1}{s-1}\right)} \geq 2 - d.
\end{align*}
Now define $S_\beta \subset A$ to be exactly the set of primes $p$ such that $|a_p|^4 \geq (2-d)\beta$, where $0 < \beta < 1$ is a constant to be fixed later.
We will assume that $S_\beta$ has an upper Dirichlet density smaller than $1/100$. 

We have the bound 
\begin{align*}
\lim_{} {\rm sup} \left(\frac{\sum_{p \in A-S_\beta}\frac{|a_p|^4}{p^s}}{\log \left(\frac{1}{s-1}\right)}\right)
\leq  (2-d)\beta.
\end{align*}

Since
\begin{align*}
\lim_{} {\rm sup} \left(\frac{\sum_{p \in A-S_\beta}\frac{|a_p|^4}{p^s}}{\log \left(\frac{1}{s-1}\right)}\right) 
+ \lim_{} {\rm sup} \left(\frac{\sum_{p \in S_\beta}\frac{|a_p|^4}{p^s}}{\log \left(\frac{1}{s-1}\right)}\right)
\geq (2-d),
\end{align*}
we have 
\begin{align*}
\lim_{} {\rm sup} \left(\frac{\sum_{p \in S_\beta}\frac{|a_p|^4}{p^s}}{\log \left(\frac{1}{s-1}\right)}\right)
\geq (2-d) \left(1- \beta\right).
\end{align*}
Using Cauchy--Schwarz
\begin{align*}
\left(\lim_{} {\rm sup} \frac{\sum_{p \in S_\beta}\frac{|a_p|^4}{p^s}}{\log \left(\frac{1}{s-1}\right)}\right) ^2 
\leq 
\left(\lim_{} {\rm sup} \frac{\sum_{p \in S_\beta}\frac{|a_p|^8}{p^s}}{\log \left(\frac{1}{s-1}\right)}\right)
\left(\lim_{} {\rm sup} \frac{\sum_{p \in S_\beta}\frac{|a_p|^0}{p^s}}{\log \left(\frac{1}{s-1}\right)}\right),
\end{align*}
where the third limit supremum can be bounded above by $1/100$ and we obtain
\begin{align}\label{deq1}
(2-d)^2  \left(1- \beta\right)^2  \leq \frac{14}{100}.
\end{align}

We appeal to a result from ~\cite{Wa16}, which we include here as a lemma.
\begin{lemma}\label{lem}
For $A,B,$ and $d$ defined as above, we have 
\begin{align*}
\lim_{}{\rm sup} \frac{\sum_{p \in A}\frac{|a_p|^3}{p^s}}{\log \left(\frac{1}{s-1}\right)} \geq  \frac{d ^{5/4}}{(14- (2-d)^2) ^{1/4}}.
\end{align*}
\end{lemma}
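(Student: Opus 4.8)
The plan is to compare the $A$-sum appearing in the lemma with the corresponding sum over $B$, and then to squeeze the $B$-side using the available eighth-power bound. Throughout write $\ell(s):=\log(1/(s-1))$, so that $\ell(s)\to+\infty$ as $s\to1^+$, and note that modifying finitely many Euler factors (those at primes of $T$) changes any of the Dirichlet series below only by $O(1)$, hence is invisible after dividing by $\ell(s)$. First I would use the identity $\sum_{p\notin T}a_p^3p^{-s}=O(1)$ from Section~\ref{bkgd}: since $a_p>0$ on $A$ and $a_p\le 0$ on $B$, this says $\sum_{p\in A}|a_p|^3p^{-s}-\sum_{p\in B}|a_p|^3p^{-s}=O(1)$, so after dividing by $\ell(s)$ the two normalized sums have equal limit superior. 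It therefore suffices to bound from below $L:=\limsup_{s\to1^+}\ell(s)^{-1}\sum_{p\in B}|a_p|^3p^{-s}$.

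Next I would interpolate on $B$ by H\"older's inequality with exponents $5/4$ and $5$, using the factorization $|a_p|^4p^{-s}=(|a_p|^3p^{-s})^{4/5}(|a_p|^8p^{-s})^{1/5}$, to obtain
\[
\sum_{p\in B}\frac{|a_p|^4}{p^s}\;\le\;\Bigl(\sum_{p\in B}\frac{|a_p|^3}{p^s}\Bigr)^{4/5}\Bigl(\sum_{p\in B}\frac{|a_p|^8}{p^s}\Bigr)^{1/5}.
\]
Dividing by $\ell(s)$ and passing to $\limsup$ (using the product inequality for $\limsup$ from Section~\ref{bkgd} and the monotonicity of $t\mapsto t^a$ on $[0,\infty)$) should give $d\le L^{4/5}M^{1/5}$, where $M:=\limsup_{s\to1^+}\ell(s)^{-1}\sum_{p\in B}|a_p|^8p^{-s}$. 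Rearranging yields $L\ge d^{5/4}/M^{1/4}$ (the case $d=0$ being trivial, and $d>0$ forcing $M>0$), so the lemma reduces to proving $M\le 14-(2-d)^2$.

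To establish that bound I would begin from $\sum_{p}a_p^8p^{-s}\le 14\,\ell(s)+O(1)$ together with $a_p^8=|a_p|^8\ge 0$; splitting the sum over $A$ and $B$ and normalizing gives $M\le 14-\liminf_{s\to1^+}\ell(s)^{-1}\sum_{p\in A}|a_p|^8p^{-s}$. Cauchy--Schwarz over $A$, combined with the classical estimate $\sum_{p\in A}p^{-s}\le\sum_{p}p^{-s}=\ell(s)+O(1)$, gives $\sum_{p\in A}|a_p|^8p^{-s}\ge\bigl(\sum_{p\in A}|a_p|^4p^{-s}\bigr)^2/(\ell(s)+O(1))$. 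The last ingredient is the \emph{equality} $\sum_{p}a_p^4p^{-s}=2\,\ell(s)+O(1)$, which together with $d=\limsup_{s\to1^+}\ell(s)^{-1}\sum_{p\in B}|a_p|^4p^{-s}$ forces $\liminf_{s\to1^+}\ell(s)^{-1}\sum_{p\in A}|a_p|^4p^{-s}=2-d\ge 0$; since all of these series have non-negative terms, the squaring above preserves the inequality, and one concludes $\liminf_{s\to1^+}\ell(s)^{-1}\sum_{p\in A}|a_p|^8p^{-s}\ge(2-d)^2$, i.e.\ $M\le 14-(2-d)^2$.

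The step I expect to be the main obstacle is this last one: it is essential that the $A$-part of the fourth-power sum has a bona fide \emph{liminf} equal to $2-d$, which relies on the exact asymptotic $\sum_{p}a_p^4p^{-s}=2\,\ell(s)+O(1)$ and not merely an upper bound, and one must carefully carry the $(1+o(1))$ factors through the squaring step (legitimate precisely because every Dirichlet series in sight has non-negative terms and $2-d\ge 0$). The Cauchy--Schwarz step is likewise useful only because $\sum_{p\in A}p^{-s}$ may be replaced by the \emph{full} prime sum $\ell(s)+O(1)$, independently of the (unknown) Dirichlet density of $A$; keeping a density of $A$ in the denominator instead would stall the argument.
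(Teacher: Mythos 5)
Your proof is correct, and its skeleton matches the paper's: the same H\"older interpolation on $B$ between the exponents $3$, $4$, $8$, the same use of $\sum_p a_p^3 p^{-s}=O(1)$ to transfer the cube sum from $B$ to $A$, and the same input $\sum_p a_p^8 p^{-s}\le 14\,\ell(s)+O(1)$. Where you genuinely diverge is in the $\limsup$/$\liminf$ bookkeeping around the eighth moment, and your version is the cleaner of the two. The paper first proves $\limsup_A |a_p|^8/\ell \ge (2-d)^2$ by a H\"older chain (combining $\limsup_A|a_p|^4/\ell\ge 2-d$ with $\limsup_A|a_p|^3/\ell\le(2-d)^{3/4}$), which only yields $\liminf_B|a_p|^8/\ell\le 14-(2-d)^2$; it then applies the $\liminf$ form of the product inequality on $B$, and at that point must identify the left-hand side with $d=\limsup_B|a_p|^4/\ell$ even though the inequality it has derived naturally bounds the $\liminf$ of that ratio. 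You instead prove the stronger statement $\liminf_A|a_p|^8/\ell\ge(2-d)^2$, by Cauchy--Schwarz against $\sum_A p^{-s}\le \ell(s)+O(1)$ together with the exact asymptotic $\sum_p a_p^4p^{-s}=2\ell(s)+O(1)$ (which pins down $\liminf_A|a_p|^4/\ell=2-d$, with $2-d\ge 0$ since the terms are non-negative). That upgrade lets you bound $M=\limsup_B|a_p|^8/\ell$ by $14-(2-d)^2$ and then apply the $\limsup$ product inequality, so that $d$ enters directly as the $\limsup$ it is defined to be, with no passage between $\liminf$ and $\limsup$ of the fourth-moment ratio on $B$. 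So your route buys a tighter and more transparent justification of exactly the step that is most delicate in the paper's own argument; the paper's route, in exchange, reuses the $(2-d)^{3/4}$ bound it needs anyway and avoids invoking the prime counting asymptotic for $\sum_A p^{-s}$.
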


\begin{proof}
A proof of this Lemma essentially arises in~\cite{Wa16}. We include a proof below for the convenience of the reader.

Using Holder's inequality and taking the limit supremum as $s \rightarrow 1^+$,
\begin{align*}
\lim_{s \rightarrow 1 ^+}{\rm sup} \frac{\sum_{p \in A}\frac{|a_p|^3}{p^s}}{\log \left(\frac{1}{s-1}\right)}
&\leq 
\left( \lim_{s \rightarrow 1 ^+}{\rm sup} \frac{\sum_{p \in A}\frac{|a_p|^4}{p^s}}{\log \left(\frac{1}{s-1}\right)} \right) ^{3/4}
\left( \lim_{s \rightarrow 1 ^+}{\rm sup} \frac{\sum_{p \in A}\frac{1}{p^s}}{\log \left(\frac{1}{s-1}\right)} \right)^{1/4} \\
&\leq (2-d)^{3/4} \cdot 1 ^{1/4}.
\end{align*}

Similarly,
\begin{align*}
\lim_{s \rightarrow 1 ^+}{\rm sup} \frac{\sum_{p \in A}\frac{|a_p|^4}{p^s}}{\log \left(\frac{1}{s-1}\right)}
&\leq 
\left( \lim_{s \rightarrow 1 ^+}{\rm sup} \frac{\sum_{p \in A}\frac{|a_p|^8}{p^s}}{\log \left(\frac{1}{s-1}\right)} \right)^{1/5}
\left( \lim_{s \rightarrow 1 ^+}{\rm sup} \frac{\sum_{p \in A}\frac{|a_p|^3}{p^s}}{\log \left(\frac{1}{s-1}\right)} \right)^{4/5}\\
\Rightarrow \quad \quad \quad  2-d 
&\leq 
\left( \lim_{s \rightarrow 1 ^+}{\rm sup} \frac{\sum_{p \in A}\frac{|a_p|^8}{p^s}}{\log \left(\frac{1}{s-1}\right)} \right)^{1/5}
(2-d)^{3/5} \\
\Rightarrow \quad \quad (2-d)^2 
&\leq
\lim_{s \rightarrow 1 ^+}{\rm sup} \frac{\sum_{p \in A}\frac{|a_p|^8}{p^s}}{\log \left(\frac{1}{s-1}\right)},
\end{align*}
From the results in the previous section, we have 
\begin{align*}
\lim_{s \rightarrow 1 ^+}{\rm sup} \frac{\sum_{p \in A}\frac{|a_p|^8}{p^s}}{\log \left(\frac{1}{s-1}\right)} +
\lim_{s \rightarrow 1 ^+}{\rm inf} \frac{\sum_{p \in B}\frac{|a_p|^8}{p^s}}{\log \left(\frac{1}{s-1}\right)}
\leq 14,
\end{align*}
and so
\begin{align}\label{neweq}
\lim_{s \rightarrow 1 ^+}{\rm inf} \frac{\sum_{p \in B}\frac{|a_p|^8}{p^s}}{\log \left(\frac{1}{s-1}\right)} \leq 14 - (2-d)^2.
\end{align}
We also have 
\begin{align*}
\sum_{p \in B}\frac{|a_p|^{8/5}|a_p|^{12/5}}{p^s} \leq \left(\sum_{p \in B}\frac{|a_p|^8}{p^s}\right)^{1/5}
\left(\sum_{p \in B}\frac{|a_p|^3}{p^s}\right)^{4/5}.
\end{align*}
We divide the equation above by $\log (1 / (s-1))$ and take the limit infimum as $s \rightarrow 1^+$,
\begin{align*}
\lim_{s \rightarrow 1 ^+}{\rm inf} \frac{\sum_{p \in B}\frac{|a_p|^4}{p^s}}{\log \left(\frac{1}{s-1}\right)}
\leq 
\left( \lim_{s \rightarrow 1 ^+}{\rm inf} \frac{\sum_{p \in B}\frac{|a_p|^8}{p^s}}{\log \left(\frac{1}{s-1}\right)} \right)^{1/5}
\left( \lim_{s \rightarrow 1 ^+}{\rm sup} \frac{\sum_{p \in B}\frac{|a_p|^3}{p^s}}{\log \left(\frac{1}{s-1}\right)} \right)^{4/5}.
\end{align*}
We apply equation~\ref{neweq} to obtain
\begin{align*}
d &\leq (14 - (2-d)^2) ^{1/5} \left(\lim_{} {\rm sup} \frac{\sum_{p \in B}\frac{|a_p|^3}{p^s}}{\log \left(\frac{1}{s-1}\right)}\right)^{4/5}\\
\Rightarrow \quad  \frac{d ^{5/4}}{(14- (2-d)^2) ^{1/4}} &\leq \lim_{}{\rm sup} \frac{\sum_{p \in B}\frac{|a_p|^3}{p^s}}{\log \left(\frac{1}{s-1}\right)}.
\end{align*}
For $s > 1$,  we have
\begin{align*}
 \sum_{p \in A}\frac{|a_p|^3}{p^s} + \left(-\sum_{p}\frac{a_p^3}{p^s}\right) =\sum_{p \in B}\frac{|a_p|^3}{p^s},
\end{align*}
and so
 \begin{align*}
 \lim_{s \rightarrow 1 ^+}{\rm sup} \frac{\sum_{p \in A}\frac{|a_p|^3}{p^s}}{\log \left(\frac{1}{s-1}\right)} \geq  \lim_{s \rightarrow 1 ^+}{\rm sup} \frac{\sum_{p \in B}\frac{|a_p|^3}{p^s}}{\log \left(\frac{1}{s-1}\right)}.
 \end{align*}
Therefore 
\begin{align*}
\lim_{s \rightarrow 1 ^+}{\rm sup} \frac{\sum_{p \in A}\frac{|a_p|^3}{p^s}}{\log \left(\frac{1}{s-1}\right)}
\geq \frac{d ^{5/4}}{(14 - (2-d)^2) ^{1/4}}.
\end{align*}
\end{proof}

Now we define $T_\alpha \subset A$ to be the set of primes $p$ such that 
\begin{align*}
|a_p|^3 \geq  \left(\frac{d ^{5/4}}{(14-(2-d)^2)^{1/4}}\right)\alpha,
\end{align*}
where $0 < \alpha < 1$ is a constant to be fixed later.

Let us assume that the upper Dirichlet density of $T_\alpha$ is less than $1/100$. We have 
\begin{align*}
\lim_{} {\rm sup} \left(\frac{\sum_{p \in A-T_\alpha}\frac{|a_p|^3}{p^s}}{\log \left(\frac{1}{s-1}\right)}\right) 
\leq \left(\frac{d ^{5/4}}{(14-(2-d)^2)^{1/4}}\right) \alpha .
\end{align*}

Lemma ~\ref{lem} implies that
\begin{align*}
\lim_{} {\rm sup} \left(\frac{\sum_{p \in A-T_\alpha}\frac{|a_p|^3}{p^s}}{\log \left(\frac{1}{s-1}\right)}\right) 
+ 
\lim_{} {\rm sup} \left(\frac{\sum_{p \in T_\alpha}\frac{|a_p|^3}{p^s}}{\log \left(\frac{1}{s-1}\right)}\right) 
\geq 
\frac{d ^{5/4}}{(14-(2-d)^2)^{1/4}}
\end{align*}
and therefore 
\begin{align*}
\lim_{} {\rm sup} \left(\frac{\sum_{p \in T_\alpha}\frac{|a_p|^3}{p^s}}{\log \left(\frac{1}{s-1}\right)}\right) 
\geq
\frac{d ^{5/4}}{(14-(2-d)^2)^{1/4}} \left(1- \alpha\right).
\end{align*}

We have  
\begin{align*}
\left(\lim_{} {\rm sup} \frac{\sum_{p \in T_\alpha}\frac{|a_p|^3}{p^s}}{\log \left(\frac{1}{s-1}\right)}\right) ^2 
\leq
\left(\lim_{} {\rm sup} \frac{\sum_{p \in T_\alpha}\frac{|a_p|^6}{p^s}}{\log \left(\frac{1}{s-1}\right)}\right) 
\left(\lim_{} {\rm sup} \frac{\sum_{p \in T_\alpha}\frac{|a_p|^0}{p^s}}{\log \left(\frac{1}{s-1}\right)}\right) .
\end{align*}
The second limit supremum can be bounded from above by 5 and the third limit supremum can be bounded from above by $1/100$, so
\begin{align}\label{deq2}
\left(\frac{d ^{5/4}}{(14-(2-d)^2)^{1/4}}\right)^2  \left(1- \alpha\right)^2   \leq \frac{5}{100}.
\end{align}

Given some value for the constant $\beta$, we want to fix $\alpha$ such that 
\begin{align}\label{alpha-equation}
((2-d)\beta)^{1/4} &= \left(\frac{d ^{5/4}}{(14-(2-d)^2)^{1/4}}\alpha\right)^{1/3}.
\end{align}

Given equation~\ref{alpha-equation}, if we set $\beta = 0.495$, we have that if $d \leq 1.258$, then equation~\ref{deq1} is false, contradicting the assumption that $S_\beta$ has an upper Dirichlet density smaller than $1/100$, and if $d > 1.258$, then equation~\ref{deq2} is false, and so $T_\alpha$ would have an upper Dirichlet density of at least $1/100$.
Either way, since for $\beta = 0.495$ and $d = 1.258$ the value of equation ~\ref{alpha-equation}  is $0.36729 ^{1/4} =0.778...$, this implies that the set of primes
\begin{align*}
\{p \mid a_p (\pi) > 0.778... - \epsilon \}.
\end{align*}
has an upper Dirichlet density at least $1/100$.

\begin{remark}
We determined our choice of $\beta$ by solving the following simultaneous equations
\begin{align*}
(2-d)  \left(1- \beta\right)  =& \frac{\sqrt{14}}{10}\\
\frac{d ^{5/4}}{(14-(2-d)^2)^{1/4}} \left(1- \alpha\right)   =& \frac{\sqrt{5}}{10},
\end{align*}
along with equation~\ref{alpha-equation}, and we obtained $\beta = 0.4957...$ and $d = 1.2581...$ .\\
\end{remark}

\subsection*{Acknowledgements}
This work began at the University of Z\"urich, where the author was supported by Forschungskredit grant K-71116-01-01 of the University of Z\"urich and partially supported by grant SNF PP00P2-138906 of the Swiss National Foundation.


\begin{thebibliography}{0000}
\bibitem{Bu97}
D. Bump, Automorphic forms and representations. Cambridge Studies in Advanced Mathematics, 55. Cambridge University Press, Cambridge, 1997. {\rm xiv}+574 pp. ISBN: 0-521-55098-X

\bibitem{GJ78}
S. Gelbart and H. Jacquet, A relation between automorphic representations of GL(2) and GL(3), Ann. Sci. Ecole Norm. Sup. (4) 11, 471--542 (1978).

\bibitem{JS81}
H. Jacquet and J. A. Shalika, On Euler products and the classification of automorphic forms. II, Amer. J. Math. 103, 777-815 (1981).

\bibitem{Ki03}
Kim, Henry H., Functoriality for the exterior square of GL4 and the symmetric fourth of GL2. With appendix 1 by Dinakar Ramakrishnan and appendix 2 by Kim and Peter Sarnak. J. Amer. Math. Soc. 16 (2003), no. 1, 139--183.

\bibitem{KS00}
H. H. Kim and F. Shahidi, Functorial products for GL 2 x GL 3 and functorial symmetric cube for GL2 , C. R. Acad. Sci. Paris Ser. I Math. 331, 599--604 (2000).

\bibitem{KS02}
H. H. Kim and F. Shahidi, Cuspidality of symmetric powers with applications, Duke Math.
J. 112, 177--197 (2002).

\bibitem{Sh81}
F. Shahidi, On certain L-functions, Amer. J. Math. 103, 297--355 (1981).

\bibitem{Sh94}
F. Shahidi, Symmetric power L-functions for GL(2), in: Elliptic curves and related
topics, Centre de Recherches Mathematiques Proceedings \& Lecture Notes 4, ed. By H. Kisilevsky and M. R. Murty, Montreal (1994).

\bibitem{Wal11}
N. Walji, Supersingular distribution, congruence class bias, and a refinement of strong multiplicity one. Thesis (Ph.D.)-- California Institute of Technology. ProQuest LLC, Ann Arbor, MI, 2011. 95 pp. ISBN 978-1303-04305-5.

\bibitem{Wa16}
N.Walji, On the distribution of Hecke eigenvalues for cuspidal automorphic representations for GL(2). Int. Math. Res. Not. 2018, no. 10, 3155--3172.

\end{thebibliography}
\end{document}